\def \c{\mathbb{C}}
\def \z{\mathbb{Z}}
\def \p{\mathbb{P}}
\def \q{\mathbb{Q}}
\def \L{\mathcal{L}}
\def \K{{\bf K}}
\def \G{{\bf G}}
\def \X{\mathfrak{X}}
\def \D{\mathfrak{D}}
\def \k{{\bf k}}
\def \U{{\bf U}}
\def \dim{\textup{dim}}
\def \CDiv{\textup{CDiv}}
\def \Pic{\textup{Pic}}
\def \ratmap{\dashrightarrow}
\theoremstyle{plain}
\newtheorem{Th}{Theorem}[section]
\newtheorem{Lem}[Th]{Lemma}
\newtheorem{Prop}[Th]{Proposition}
\theoremstyle{definition}
\newtheorem{Def}[Th]{Definition}
\newtheorem{Rem}[Th]{Remark}
\begin{document}

\title{Note on the Grothendieck group of subspaces of rational functions and Shokurov's b-divisors}
\author{Kiumars Kaveh}
\address{Department of Mathematics, University of Pittsburgh,
Pittsburgh, PA, USA.}
\email{kaveh@pitt.edu}

\author{A. G. Khovanskii}
\address{Department of Mathematics, University of Toronto, Toronto,
Canada; Moscow Independent University; Institute for Systems Analysis, Russian Academy of Sciences.}
\email{askold@math.utoronto.ca}

\begin{abstract}
In a previous paper the authors develop an intersection theory for subspaces of rational functions on an algebraic variety $X$
over $\k = \c$. In this note, we first extend this intersection theory to an arbitrary algebraically closed ground field $\k$. 
Secondly we give an isomorphism between the group of b-divisors on the birational class of $X$ and the 
Grothendieck group of the semigroup of subspaces of rational functions on $X$. The constructed isomorphism moreover 
preserves the intersection numbers. This provides an alternative approach to b-divisors and their intersection theory.
\end{abstract}

\thanks{The first author is partially supported by a
Simons Foundation Collaboration Grants for Mathematicians (Grant ID: 210099) and a National Science Foundation Grant 
(Grant ID: 1200581).}

\thanks{The second author is partially supported by the Canadian Grant No. 156833-12.}

\keywords{intersection number, Cartier divisor, b-divisor, Grothendieck group} 
\subjclass[2010]{Primary: 14C20, 14Exx}

\date{\today}

\maketitle


\section*{Introduction}
In \cite{KKh-MMJ} the authors develop an intersection theory for subspaces of rational functions on an arbitrary variety over
$\k = \c$. In this short note we first extend this intersection theory to an arbitrary algebraically closed field $\k$, and secondly we observe that there is a direct connection between this intersection theory and Shokurov's b-divisors. This approach provides an alternative way of introducing b-divisors and their intersection theory, and in our opinion, is suitable for several applications in intersection theory.

Let $X$ be an irreducible variety of dimension $n$ over an algebraically closed ground field $\k$. Consider the collection $\K(X)$ of all the finite dimensional $\k$-subspaces of rational functions on $X$. The set $\K(X)$ is equipped with a natural product: for two subspaces $L, M \in \K(X)$, the product $LM$ is the subspace spanned by all the $fg$ where 
$f \in L$ and $g \in M$. With this product $\K(X)$ is a commutative semigroup (which may not be canecllative). Let $L_1, \ldots, L_n$ be subspaces in $\K(X)$, in \cite{KKh-MMJ} we associate a non-negative integer $[L_1, \ldots, L_n]$ to 
the subspaces $L_i$ and call it their intersection index. It is defined to be the number of solutions $x$ of a system 
$f_1(x) = \cdots = f_n(x) = 0$ where $f_i \in L_i$ are general elements and $x$ lies in a certain non-empty open subset $U$ of $X$ (depending on the $L_i$). In \cite{KKh-MMJ} it is shown that, when $\k = \c$, the intersection index is well-defined and moreover it is multi-additive with respect to the product of subspaces. It follows that the intersection index extends to a multi-additive integer valued function on the Grothendieck group $\G(X)$ of the semigroup $\K(X)$ (see Section \ref{sec-Grothendieck-group}). We regard $\G(X)$, together with its intersection index, as an extension of the intersection theory of (Cartier) divisors on complete varieties. In this note we observe that the the well-definedness and multi-additivity of the intersection index for an arbitrary algebraically
closed field $\k$ follows from the usual intersection theory on a product of projective spaces (see Sections \ref{sec-int-index} and 
\ref{sec-int-index-proofs}).

Consider the collection of all projective birational models of $X$, i.e. all birational maps 
$\pi: X_\pi \ratmap X$ where $X_\pi$ is projective. A (Cartier) b-divisor on $X$ is a direct limit of Cartier divisors $(X_\pi, D_\pi)$, with respect to a natural partial order on birational models of $X$. One verifies that the intersection product of Cartier divisors induces an intersection product on b-divisors (see Section \ref{sec-b-divisor}). The b-divisors (birational divisors) were introduced by Shokurov (see \cite{Iskovskikh, Shokurov}) and play an important role in 
birational geometry. 

The main observation of this note is that the Grothendieck group $\G(X)$ of $\K(X)$ can be identified with the group of b-divisors on $X$, and moreover this identification preserves the intersection index (Theorem \ref{th-main}).

A b-divisor is represented by a divisor on a projective birational model of the variety $X$. The collection of birational models of a variety, the main object of study in birational geometry, is a complicated object and intrinsically related to the notion of resolution of singularities and Minimal Model Program.  Moreover, proving statements about b-divisors and their intersection theory relies on the statements about usual divisors and their intersection theory, while the intersection theory of b-divisors is more stable in the sense that it is invariant under birational isomorphisms and one may regard it as easier to treat.

On the other hand, the Grothedieck group construction in \cite{KKh-MMJ}
suggests a different way which does not involve completions/birational models of $X$, and the 
invariance under a birational isomorphism is evident from the definition. This description of b-divisors and their intersection theory is suitable for several applications. We mention few here: (1) It provides a framework to extend the celebrated Bernstein-Kushnirenko theorem (from toric geometry) on the number of solutions of a system of Laurent polynomial equations in the algebraic torus $(\k^*)^n$, to arbitrary varieties and arbitrary systems of equations (see \cite{KKh-MMJ}). (2) Fix a valuation 
$v$ on the field of rational functions $\k(X)$ and with values in $\z^n$. One can then associate certain convex bodies (Newton-Okounkov bodies) to subspaces of rational functions, which using this approach can be identified with b-divisors, such that their Euclidean volumes give the intersection numbers of the corresponding b-divisors. This way one obtains transparent proofs of the Hodge inequality, and its generalizations, for intersection numbers \cite{KKh-Annals}.

Finally, we believe that the elementary nature of the notions needed to define the semigroup $\K(X)$ and its intersection index, 
i.e. subspaces of rational functions and number of solutions of a system of equations, makes this approach accessible to a wide range of audience and in particular suitable for a first course in algebraic geometry.

And few words about the organization of this short note:
Section \ref{sec-Grothendieck-group} covers basic definitions about the semigroup $\K(X)$ of subspaces of rational functions.
Sections \ref{sec-int-index} and \ref{sec-regular-Kodaira} recall some material about intersection index from \cite{KKh-MMJ}. 
Section \ref{sec-b-divisor} recalls basic definitions about b-divisors. In Section \ref{sec-main} we show that the Grothendieck group 
$\G(X)$ can be identified with the group of b-divisors. The last section is devoted to (short) proofs of the well-definedness and multi-additivity of the intersection index for an arbitrary algebraically closed field $\k$.



\section{Subspaces of rational functions and Grothendieck group} \label{sec-Grothendieck-group}
Throughout this note the ground field $\k$ is an algebraically closed field of arbitrary characteristic. Let $X$ be an 
algebraic variety over $\k$.

\begin{Def}
We denote the collection of all non-zero finite dimensional subspaces of the field of rational functions 
$\k(X)$ by $\K(X)$. Given $L, M \in \K(X)$ let the product $LM$ be the subspace spanned by all the products $fg$,
$f \in L$ and $g \in M$. With this product of subspaces $\K(X)$ is a commutative semigroup. 
\end{Def}

Let $K$ be a commutative semigroup (whose operation we denote by
multiplication). $K$ is said to have the {\bf cancellation property} if
for $x,y,z \in K$, the equality $xz=yz$ implies $x=y$. Any
commutative semigroup $K$ with the cancellation property can be extended
to an abelian group $G(K)$ consisting of formal quotients $x/y$, $x,
y \in K$. For $x,y,z,w \in K$ we identify the quotients $x/y$ and
$w/z$, if $xz = yw$.

Given a commutative semigroup $K$ (not necessarily with the cancellation
property), we can get a semigroup with the cancellation property by
considering the equivalence classes of a relation $\sim$ on $K$:
for $x, y \in K$ we say $x \sim y$ if there is $z \in K$ with $xz = yz$. The
collection of equivalence classes $K / \sim$ naturally has structure
of a semigroup with cancellation property. Let us denote the group
of formal quotients of $K / \sim$ again by $G(K)$. It is called the {\bf
Grothendieck group of the semigroup $K$}. The map which sends $x \in K$ to its
equivalence class $[x] \in K / \sim$ gives a natural homomorphism
$\phi: K \to G(K)$.

The Grothendieck group  $G(K)$ together with the homomorphism $\phi: K \to
G(K)$ satisfies the following universal property: for any other
group $G'$ and a homomorphism $\phi': K \to G'$, there exists a unique
homomorphism $\psi: G(K) \to G'$  such that $\phi' = \psi \circ
\phi$.

\begin{Def}
For two subspaces $L, M \in \K(X)$, we write $L \sim M$
if $L$ and $M$ are equivalent as elements of the multiplicative
semigroup $\K(X)$, that is, if there is $N \in \K(X)$
with $LN = MN$.
\end{Def}

Let $L \in \K(X)$. A rational function $f$ is said to be {\bf integral over $L$} if it satisfies an equation $f^n + \sum_{i=0}^{n-1}g_if^i = 0$, where $g_i \in L^{n-i}$, $i=1,\ldots, n-1$. The {\bf completion}
$\overline{L}$ is the collection of all rational functions which are integral
over $L$. The following result describes the completion $\overline{L}$ of a subspace $L$ as the
largest subspace equivalent to $L$ (see \cite[Appendix 4]{Zariski}
for a proof, also see \cite{KKh-MMJ}).
\begin{Th} \label{th-completion}
(1) The completion $\overline{L}$ is finite dimensional. (2) The completion $\overline{L}$ is the largest
subspace which is equivalent to $L$. That is, (a) $\overline{L}
\sim L$, and (b) if for $M \in \K(X)$ we have $M \sim
L$ then $M \subset \overline{L}$.
\end{Th}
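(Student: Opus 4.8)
The plan is to realize $\overline{L}$ as the degree-one part of the integral closure of a Rees-type graded algebra. Fix a basis $f_1,\dots,f_r$ of $L$, introduce a formal variable $t$, and set $R=\bigoplus_{m\ge 0}L^m t^m=\k[f_1 t,\dots,f_r t]\subset \k(X)[t]$, a finitely generated graded domain with $R_0=\k$ and $R_m=R_1^m$ (so $R$ is standard graded). Writing $S=\k(X)[t]$ and letting $\tilde R$ be the integral closure of $R$ in $S$, I would first establish the dictionary $f\in\overline L \iff ft\in\tilde R$, together with $(\tilde R)_1=\overline L\, t$. The forward direction is immediate: multiplying the integral relation $f^n+\sum_{i=0}^{n-1}g_i f^i=0$ (with $g_i\in L^{n-i}$) by $t^n$ turns it into a monic relation for $ft$ whose coefficients $g_i t^{n-i}$ lie in $R_{n-i}$. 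For the converse I would use that $\tilde R$ is a graded subring of $S$: any integral relation for the homogeneous element $ft$ can be replaced by its degree-$N$ homogeneous component, which after dividing by $t^N$ is exactly an integral equation of $f$ over $L$. The grading of $\tilde R$ follows from the $\k^\ast$-action $t\mapsto \lambda t$ fixing $R$, using that $\k$ is infinite and a Vandermonde argument to extract homogeneous components of an integral element.

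For part (1) it then suffices to show that $\tilde R$ is a finite $R$-module, since a finitely generated graded module over $R$ has each graded piece finite dimensional over $R_0=\k$; in particular $(\tilde R)_1=\overline L\,t$ is finite dimensional, hence so is $\overline L$. This finiteness is the main obstacle. The subtlety is that $\tilde R$ must be taken inside $S$ rather than inside $\textrm{Frac}(R)$ --- indeed $ft$ need not lie in $\textrm{Frac}(R)$ even when it is integral over $R$ --- so Noether's finiteness theorem does not apply verbatim. I would get around this as follows: every element integral over $R$ is algebraic over $F:=\textrm{Frac}(R)$, hence lies in the relative algebraic closure $F'$ of $F$ inside the field $M:=\k(X)(t)$; since $M$ is finitely generated over $F$, the extension $F'/F$ is finite. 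Now $\tilde R$ coincides with the integral closure of $R$ in the finite extension $F'$, which is a finite $R$-module because $R$ is a finitely generated domain over a field (valid in arbitrary characteristic). This gives the finite dimensionality of $\overline L$.

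For part (2a), note first that $L\subseteq\overline L$, since each $f\in L$ satisfies $f+(-f)=0$ with $-f\in L^1$; hence $LN\subseteq \overline L N$ for every $N$. It remains to produce $N$ with $\overline L N\subseteq LN$. Consider the graded $R$-subalgebra $C:=\bigoplus_{m\ge 0}\overline L^m t^m\subseteq \tilde R$ generated by $(\tilde R)_1$; being generated by finitely many elements each integral over $R$, it is module-finite over $R$. Choosing homogeneous $R$-module generators of $C$ of degree at most $D$ and using that $R$ is standard graded, one obtains $C_{m+1}=R_1 C_m$ for all $m\ge D$, that is $\overline L^{\,m+1}=L\,\overline L^{\,m}$. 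Taking $N=\overline L^{\,m}$ for such an $m$ then gives $\overline L N=\overline L^{\,m+1}=L\,\overline L^{\,m}=LN$, so $\overline L\sim L$.

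For part (2b) I would use the determinant trick. Suppose $M\sim L$, witnessed by $MN=LN$, and fix a basis $e_1,\dots,e_d$ of $N$. For $g\in M$ we have $gN\subseteq MN=LN=\sum_j L e_j$, so $g e_i=\sum_j a_{ij}e_j$ with $a_{ij}\in L$. Thus $(gI-A)\mathbf e=0$ for the nonzero vector $\mathbf e=(e_1,\dots,e_d)^{\mathsf T}$ over the field $\k(X)$, forcing $\det(gI-A)=0$. Expanding this determinant yields a monic equation $g^d+\sum_{i=1}^d c_{d-i}\,g^{\,d-i}=0$ whose coefficient $c_{d-i}$ is, up to sign, a sum of $i\times i$ minors of $A$ and therefore lies in $L^i$. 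This is precisely an integral equation of $g$ over $L$, so $g\in\overline L$ and $M\subseteq\overline L$, completing the proof.
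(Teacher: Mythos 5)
Your proof is correct. Note that the paper itself offers no argument for this theorem; it simply cites \cite[Appendix 4]{Zariski} and \cite{KKh-MMJ}, so there is no in-text proof to compare against. What you have written is essentially the classical argument from those sources, reassembled in the Rees-algebra language: the dictionary $f\in\overline L\iff ft\in\tilde R$, Noether's finiteness of integral closure for part (1), the ``generation in bounded degree'' argument for a standard graded module for part (2a), and the determinant trick for part (2b) are all sound, and you correctly handle the one genuinely delicate point --- that the integral closure must be taken in $\k(X)[t]$ rather than in $\textup{Frac}(R)$, which you repair by passing to the relative algebraic closure of $\textup{Frac}(R)$ in the finitely generated extension $\k(X)(t)$ before invoking finiteness of integral closure. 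The net effect is a self-contained proof of a statement the paper leaves entirely to the literature, with no gaps that I can find.
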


\begin{Rem}
Let us call a subspace $L$, {\bf complete} if $\overline{L} = L$. If
$L$ and $M$ are complete subspaces, then $LM$ is not necessarily
complete. For two complete subspaces $L, M \in \K$,
define $$L * M = \overline{LM}.$$ The collection of complete
subspaces together with $*$ is a semigroup with the cancellation
property. Theorem \ref{th-completion} in fact shows that $L \mapsto
\overline{L}$ gives an isomorphism between the quotient semigroup
$\K / \sim$ and the semigroup of complete
subspaces (with $*$).
\end{Rem}

We denote the Grothendieck group of the semigroup $\K(X)$ by $\G(X)$.

\section{Intersection index of subspaces of rational functions} \label{sec-int-index}
In this section we define the intersection index of finite dimensional subspaces of rational function.
Let ${\bf L} = (L_1, \ldots, L_n)$ be an $n$-tuple of non-zero finite dimensional subspaces of rational functions.
Let $Z \subset X$ be a closed subvariety of $X$ cotaining the poles of all rational functions from the $L_i$, as well as 
all the points $x$ at which all functions from some subspace $L_i$ vanish. 

\begin{Th}[Intersection index is well-defined] \label{th-int-index-well-def}
There exists a non-empty Zariski open subset $\U \subset L_1 \times \cdots \times L_n$ such that 
for any $(f_1, \ldots, f_n) \in \U$ the number of solutions
$$\{ x \in X \setminus Z \mid f_1(x) = \cdots = f_n(x) = 0 \}$$ 
is finite and is independent of the choice of $(f_1, \ldots, f_n) \in \U$.
\end{Th}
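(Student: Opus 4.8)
The plan is to reduce the statement to the classical Bézout-type intersection theory on a product of projective spaces, as suggested by the introduction. The key idea is that the data $(L_1, \dots, L_n)$ gives rise to a rational map $\Phi \colon X \dashrightarrow \p(L_1^*) \times \cdots \times \p(L_n^*)$, so that the vanishing of a general $f_i \in L_i$ corresponds to a hyperplane section in the $i$-th factor.

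Let me think about how to set this up carefully.

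First I would choose a basis for each $L_i$, say $L_i = \span_\k(f_{i,0}, \dots, f_{i,N_i})$, and define on $X \setminus Z$ the regular map
$$\Phi_i \colon X \setminus Z \to \p^{N_i}, \qquad x \mapsto (f_{i,0}(x) : \cdots : f_{i,N_i}(x)).$$
This is well-defined precisely because $Z$ contains the poles of all functions in $L_i$ and all common zeros of $L_i$, so at least one coordinate is finite and they are not all zero. Assembling these gives a morphism $\Phi = (\Phi_1, \dots, \Phi_n) \colon X \setminus Z \to \p^{N_1} \times \cdots \times \p^{N_n}$. Let $Y$ be the closure of the image of $\Phi$, an irreducible projective variety. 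The crucial translation is that a function $f_i = \sum_j c_{i,j} f_{i,j} \in L_i$ vanishes at $x$ if and only if the linear form $\ell_{c_i} = \sum_j c_{i,j} z_{i,j}$ vanishes at $\Phi_i(x)$, i.e.\ $\Phi(x)$ lies on the hyperplane $H_{c_i} = \{\ell_{c_i} = 0\}$ in the $i$-th factor. Hence the solution set in the theorem is the preimage under $\Phi$ of $Y \cap H_{c_1} \cap \cdots \cap H_{c_n}$.

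Next I would invoke the standard result on generic hyperplane sections of a projective variety: for a general choice of hyperplanes $H_{c_1}, \dots, H_{c_n}$ (one from each factor), the intersection $Y \cap H_{c_1} \cap \cdots \cap H_{c_n}$ is either empty or a finite set of reduced points lying in the smooth locus of $Y$, away from the indeterminacy loci, and its cardinality equals a fixed intersection number (a multidegree of $Y$) independent of the generic choice. This is where the genericity open set $\U$ comes from: the conditions that the intersection be transverse, of the expected dimension zero, and contained in the good locus are each given by the nonvanishing of appropriate resultants/discriminants, hence cut out a nonempty Zariski open subset of the space of coefficient tuples, which is $L_1 \times \cdots \times L_n$. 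The count being constant on $\U$ is exactly the statement that the multidegree of $Y$ in the $n$ chosen ample classes is a well-defined intersection number.

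The main obstacle, and the step requiring the most care, is controlling the difference between the points of $Y \cap H_{c_1} \cap \cdots \cap H_{c_n}$ and the actual solutions $x \in X \setminus Z$: the map $\Phi$ need not be injective, and its image $Y$ may not have dimension $n$. I would handle this by restricting to the open set where $\Phi$ is a well-behaved (generically finite, or generically injective after further shrinking) map onto $Y$, and by arguing that for generic hyperplane choices the intersection points avoid the loci where the fiber cardinality of $\Phi$ jumps or where $\Phi$ fails to be defined. One must verify that the finitely many solution points in $X \setminus Z$ are in bijection with the intersection points on $Y$ (for generic coefficients), so that counting upstairs and downstairs agree; this uses that a general linear section meets $Y$ only in the locus over which $\Phi$ is nicely behaved. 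Once this correspondence is established for a nonempty open $\U$, both finiteness and independence of the count follow from the projective intersection theory on $Y \subset \p^{N_1} \times \cdots \times \p^{N_n}$.
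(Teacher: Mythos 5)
Your overall strategy---translating the vanishing of $f_i \in L_i$ into a hyperplane section in $\p(L_i^*)$ and invoking intersection theory on a product of projective spaces---is exactly the strategy of the paper's proof. But there is one genuine gap, and it sits precisely at the point you yourself flag as ``the step requiring the most care.'' Your target is $\p(L_1^*)\times\cdots\times\p(L_n^*)$, and you propose to identify the solution set in $X\setminus Z$ with the points of $Y\cap H_{c_1}\cap\cdots\cap H_{c_n}$ via a bijection, obtained by ``restricting to the open set where $\Phi$ is \ldots generically injective after further shrinking.'' This cannot work: if $\Phi$ is generically finite of degree $d>1$ onto $Y$, no shrinking of the source makes it injective---the generic fibre still has $d$ points---and the claimed bijection is simply false. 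Concretely, take $X=\p^1$ with coordinate $t$, $n=1$, and $L_1$ spanned by $1$ and $t^2$ (characteristic $\neq 2$): a generic $c_0+c_1t^2$ has two zeros on $X$, while $Y=\p(L_1^*)\cong\p^1$ meets the corresponding hyperplane (here a point) in a single point. So the count upstairs is $2$ and the count on $Y$ is $1$.

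The paper closes this gap by a small but essential modification of the target: it first birationally embeds $X$ into some $\p^N$ (Chow's lemma) and works with $P=\p^N\times\p(L_1^*)\times\cdots\times\p(L_n^*)$ and the map $x\mapsto(x,\Phi_{L_1}(x),\ldots,\Phi_{L_n}(x))$. The extra factor forces $\Phi_{\bf L}$ to be a birational isomorphism onto its image $Y_{\bf L}$, so the bijection you want between solutions and intersection points (over a dense open locus of $Y_{\bf L}$, which generic hyperplane tuples meet exclusively, by the dimension count you sketch) actually holds, and the count equals the intersection number $[Y_{\bf L}]\cdot[\p^N\times H_1\times\cdots\times H_n]$. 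Alternatively you could repair your version without the extra factor by carrying the mapping degree along: when $\dim Y=n$ the number of solutions is $d$ times the intersection number on $Y$ (still constant on a nonempty open $\U$), and when $\dim Y<n$ both the generic linear section and the generic solution set are empty; but then the case analysis must be made explicit rather than asserting a bijection. (A separate, minor caveat for arbitrary characteristic: your claim that the generic section consists of \emph{reduced} points can fail when the Kodaira maps are inseparable, so one should phrase the conclusion in terms of the set-theoretic count being constant rather than in terms of transversality.)
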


We denote the number of solutions $\{ x \in X \setminus Z \mid f_1(x) = \cdots = f_n(x) = 0 \}$ in Theorem 
\ref{th-int-index-well-def} by $[L_1, \ldots, L_n]$ and call it the {\bf intersection index} of the subspaces $L_i$.

The following are immediate corollaries of the definition of the intersection index:
(1) $[L_1,\dots,L_n]$ is a symmetric function of
$L_1,\dots,L_n \in \K(X)$, (2) The intersection index
is monotone, (i.e. if $L'_1\subseteq L_1,\dots, L'_n\subseteq L_n$,
then $[L'_1,\dots,L'_n] \leq [L_1,\dots,L_n]$, and (3) The intersection index is non-negative.

\begin{Th}[Multi-additivity of intersection index] \label{th-int-index-multiadd}
Let $L_1', L_1'', L_2, \ldots, L_n \in {\bf K}(X)$
and put $L_1= L_1'L_1''$. Then
$$[L_1,\dots,L_n]=[L'_1,L_2,\dots,L_n]+[L''_1,L_2,\dots,L_n].$$
\end{Th}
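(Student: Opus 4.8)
The plan is to reduce the $n$-dimensional intersection problem to a one-dimensional one on a curve, where additivity becomes the statement that degrees of base divisors add under multiplication of subspaces. First I would fix generic elements $f_2 \in L_2, \ldots, f_n \in L_n$. By Theorem~\ref{th-int-index-well-def} (applied to the $(n-1)$-tuples obtained by dropping the first slot, and to the full $n$-tuples for $L_1, L_1', L_1''$ simultaneously), for a sufficiently general such choice the common zero locus
$$ C = \{\, x \in X \setminus Z \mid f_2(x) = \cdots = f_n(x) = 0 \,\} $$
is a curve, and the three indices $[L_1, L_2, \ldots, L_n]$, $[L_1', L_2, \ldots, L_n]$, $[L_1'', L_2, \ldots, L_n]$ are each computed by counting the zeros on $C$ of a general element of $L_1$, $L_1'$, $L_1''$ respectively. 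Passing to the normalization $\tilde C$ of the closure of $C$ in a projective model gives a smooth complete curve on which the restrictions of rational functions have honest divisors; because $Z$ was chosen to contain all poles and all base loci of the $L_i$, the functions in $L_1, L_1', L_1''$ restrict to regular, base-point-free families on $C$, so their base divisors on $\tilde C$ are supported on the finitely many points of $\tilde C \setminus C$.

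The crux is then a computation on $\tilde C$. For a finite-dimensional subspace $A \subset \k(\tilde C)$ define its base divisor $F_A = \sum_p \big(\min_{f \in A} \ord_p f\big)\, p$; a general $f \in A$ attains the minimum at every point simultaneously (finitely many conditions), so the number of its zeros lying in $C$ equals $-\deg F_A =: m(A)$. The key identity is $m(A \cdot B) = m(A) + m(B)$, which follows from $F_{AB} = F_A + F_B$, which in turn comes from the elementary fact that for each point $p$,
$$ \min_{h \in AB} \ord_p h = \min_{f \in A,\, g \in B} \ord_p(fg) = \min_{f \in A} \ord_p f + \min_{g \in B} \ord_p g ; $$
the first equality holds because $\ord_p$ of any linear combination of products $f_i g_i$ is at least the minimum of the $\ord_p(f_i g_i)$, while the products $fg$ themselves lie in $AB$. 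Since restriction to $\tilde C$ is a ring homomorphism, $L_1|_{\tilde C} = (L_1'|_{\tilde C})(L_1''|_{\tilde C})$, and applying $m(AB) = m(A)+m(B)$ with $A = L_1'|_{\tilde C}$, $B = L_1''|_{\tilde C}$ yields the theorem.

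I expect the main obstacle to be the first step: rigorously justifying that a single general choice of $(f_2, \ldots, f_n)$ cuts out a curve $C$ on which all three indices are simultaneously realized as counts of zeros of general members of $L_1, L_1', L_1''$, and that the set-theoretic count defining the intersection index agrees with the degree $m(\,\cdot\,)$ of the mobile part. The latter requires that a general element of each subspace meet $C$ in reduced points; over a field of arbitrary characteristic this is the delicate point, since Bertini-type transversality can fail for inseparable maps, and it is precisely here that one must lean on Theorem~\ref{th-int-index-well-def} and the classical intersection theory on a product of projective spaces to guarantee that the count is the expected degree. An alternative, fully parallel route avoids curves altogether: form the product of Kodaira maps into $\p((L_1')^{*}) \times \p((L_1'')^{*}) \times \prod_{i \geq 2}\p(L_i^{*})$, observe that $L_1 = L_1' L_1''$ corresponds under the Segre embedding to the line bundle $\mathcal{O}(1,1)$ so that the divisor class attached to $L_1$ is the sum of those attached to $L_1'$ and $L_1''$, and conclude by multilinearity of the intersection number on the image; this trades the characteristic subtlety for the standard transversality issues on products of projective spaces.
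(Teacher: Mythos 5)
Your closing ``alternative route'' is, in fact, the paper's actual proof: the paper maps $X$ into $\p^N\times\p((L_1'L_1'')^*)\times\p(L_2^*)\times\cdots\times\p(L_n^*)$, factors this through the Segre map $\p(L_1'^*)\times\p(L_1''^*)\to\p((L_1'\otimes L_1'')^*)$, and observes that the pull-back of the hyperplane determined by $f\otimes g$ is $(H_f\times\p(L_1''^*))+(\p(L_1'^*)\times H_g)$, so that additivity becomes an identity of cycle classes in the Chow ring of the ambient product of projective spaces. Expanded, that one sentence of yours is the intended argument.

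Your primary route, through a general complete-intersection curve, is genuinely different, and its engine --- the valuation identity $\min_{h\in AB}\ord_p h=\min_{f\in A}\ord_p f+\min_{g\in B}\ord_p g$, hence $F_{AB}=F_A+F_B$ and $m(AB)=m(A)+m(B)$ --- is correct and appealingly elementary. But it has two unclosed gaps. First, the reduction: Theorem \ref{th-int-index-well-def} provides an open set in the full product $L_1\times\cdots\times L_n$, and you must still argue that one generic choice of $(f_2,\dots,f_n)$ serves $L_1,L_1',L_1''$ simultaneously and that the common zero locus in $X\setminus Z$ is purely one-dimensional; excess components can occur (they contribute nothing, but then $C$ is not a curve and you must pass to its one-dimensional part). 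Second, and more seriously, $-\deg F_A$ counts the zeros of a general $f\in A$ on $\tilde C$ \emph{with multiplicity}, while the index is defined as a number of solutions. In characteristic $p$ these genuinely differ: for $A$ spanned by $1$ and $t^p$ on $\p^1$, a general member has a single zero, of multiplicity $p$. Leaning on Theorem \ref{th-int-index-well-def} does not close this, since that theorem asserts constancy of the count, not simplicity of the zeros. Indeed the purely set-theoretic count is not even additive in characteristic $p$ (take $L_1'$ spanned by $1,t^p$ and $L_1''$ spanned by $1,t$ on $\p^1$: the counts are $1$, $1$ and $p+1$), so any correct proof must interpret the index cycle-theoretically; once that interpretation is adopted, your curve argument can be completed, but identifying the index with $-\deg F_A$ on a general complete-intersection curve then requires essentially the same projection-formula input from intersection theory on the product of projective spaces that the paper's proof uses directly.
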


We will prove Theorem \ref{th-int-index-well-def} and Theorem \ref{th-int-index-multiadd} in Section \ref{sec-int-index-proofs}.
The proofs relies on the notion of intersection product in the Chow rings of products of projective spaces.

From multi-additivity of the intersection index it follows that the
intersection index is invariant under the equivalence of subspaces,
namely if $L_1, \ldots, L_n$ and $M_1, \ldots, M_n \in \K$ are $n$-tuples of subspaces and for each $i$, $L_i
\sim M_i$ then
$$[L_1, \ldots, L_n] = [M_1, \ldots, M_n].$$ Hence one can extend the
intersection index to the Grothendieck group $\G(X)$ of $\K$. 
In particular, Theorem \ref{th-completion} implies that all the
intersection indices of a subspace $L \in \K$ and its
completion $\overline{L}$ are the same.

Analogous to the Kodaira map of a very ample line bundle, one can assign to a subspace $L \in \K(X)$ its Kodaira map $\Phi_L$ which is a rational 
map from $X$ to $\p(L^*)$, the projectivization of the dual space $L^*$:
let $x \in X$ be such that $f(x)$ is defined for all $f \in L$. Then $\Phi(x)$ is represented by the linear functional
in $L^*$ which sends $f$ to $f(x)$. 

Let $Y_L$ denote the closure of the image of $X$ in $\p(L^*)$. The next proposition relates the self-intersection index of a subspace with the degree of $Y_L$. It easily follows from the definition of the intersection index.
\begin{Prop}[Self-intersection index and degree] \label{prop-self-int-deg}
Let $L \in \K$ be a subspace and $\Phi_L: X \ratmap Y_L \subset \p(L^*)$ its
Kodaira map. (1) If $\dim X = \dim Y_L$ then $\Phi_L$ has finite mapping degree $d$ and
$[L, \ldots, L]$ is equal to the degree of the subvariety $Y_L$ (in $\p(L^*)$) multiplied with $d$.
(2) If $\dim X > \dim Y_L$ then $[L, \ldots, L] = 0$.
\end{Prop}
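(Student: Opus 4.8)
The plan is to prove Proposition \ref{prop-self-int-deg} directly from the definition of the intersection index, by interpreting the solution count geometrically through the Kodaira map $\Phi_L$. The essential idea is that a general element $f \in L$, viewed as a linear functional on $L^*$, determines a hyperplane $H_f \subset \p(L^*)$, and the zero locus $\{x : f(x) = 0\}$ away from $Z$ corresponds (under $\Phi_L$) to the preimage $\Phi_L^{-1}(H_f)$. Thus the intersection index $[L, \ldots, L]$, which counts common solutions of $f_1(x) = \cdots = f_n(x) = 0$ for general $f_i \in L$, should equal the number of points in $X \setminus Z$ mapping to a general intersection $H_{f_1} \cap \cdots \cap H_{f_n}$ of $n$ hyperplanes in $\p(L^*)$.

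First I would set up the correspondence carefully: after shrinking to the open set $U = X \setminus Z$ on which $\Phi_L$ is a morphism (all $f \in L$ are regular and do not simultaneously vanish there), I would note that $\Phi_L(U) \subset Y_L$ is dense and that each $f \in L$ pulls back the hyperplane section cut out by the corresponding functional. Next, in case (1) where $\dim Y_L = \dim X = n$, a general collection of $n$ hyperplanes in $\p(L^*)$ meets $Y_L$ transversally in exactly $\deg Y_L$ points, all lying in the smooth locus of $Y_L$ and inside the dense image $\Phi_L(U)$; this uses a Bertini-type genericity argument to ensure the hyperplanes avoid the bad loci (the boundary $Y_L \setminus \Phi_L(U)$, the singular points, and the indeterminacy of the map). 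Each such point has exactly $d$ preimages in $U$ since $\Phi_L$ has generic finite fiber cardinality $d$, so the total count is $d \cdot \deg Y_L$. For case (2) where $\dim Y_L < n$, a general intersection of $n$ hyperplanes is disjoint from $Y_L$ (since $\dim Y_L < n$ forces the generic complete intersection to miss it), so there are no solutions and $[L,\ldots,L] = 0$.

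The main obstacle will be the genericity bookkeeping needed to align the ``general $(f_1,\ldots,f_n) \in \U$'' from Theorem \ref{th-int-index-well-def} with ``general hyperplanes in $\p(L^*)$'' and to guarantee that all the counted points genuinely lie in $U = X \setminus Z$ rather than on the excluded locus or over singular or boundary points of $Y_L$. Concretely, I must verify that the bad configurations form a proper closed subset of $L^n$, so that a generic $n$-tuple simultaneously gives a transversal intersection on $Y_L$, avoids $Y_L \setminus \Phi_L(U)$, and pulls back to a reduced finite set of exactly $d \cdot \deg Y_L$ points in $U$; the transversality and the count-by-mapping-degree statements are standard over an algebraically closed field once one is on the smooth locus, but one must take care in positive characteristic where the separability of $\Phi_L$ (and hence the clean meaning of the mapping degree $d$) requires attention. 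Since the nonempty open set in Theorem \ref{th-int-index-well-def} on which the count is constant may be intersected with the open set where the above genericity holds, it suffices to exhibit the count on this common open set, and the result follows.
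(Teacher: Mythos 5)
Your argument is correct and is essentially the proof the paper has in mind: the paper gives no written proof beyond the remark that the proposition ``easily follows from the definition of the intersection index,'' and your translation of the solution count $f_1(x)=\cdots=f_n(x)=0$ into a general codimension-$n$ linear section of $Y_L$, counted with the generic fiber cardinality $d$ of $\Phi_L$, is exactly that argument (it is also the specialization of the Chow-ring computation in Section \ref{sec-int-index-proofs}). Your caveat about positive characteristic is well taken: the count gives $d\cdot\deg Y_L$ provided ``mapping degree'' is read as the number of points in a general fiber (the separable degree) rather than the degree of the field extension $[\k(X):\k(Y_L)]$.
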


\section{Cartier divisor associated to a subspace of rational functions
with a regular Kodaira map} \label{sec-regular-Kodaira}  

The material in this Section are taken from \cite[Section 6]{KKh-MMJ}.

A {\it Cartier divisor} on an irreducible variety $X$ is a divisor which can
be represented locally as a divisor of a rational function. Any
rational function $f$ defines a {\it principal Cartier divisor}
denoted by $(f)$. The Cartier divisors are closed under the addition and
form an abelian group which we will denote by $\CDiv(X)$. A
dominant morphism $\Phi:X\to Y$ between varieties $X$ and $Y$ gives a pull-back
homomorphism $\Phi^*: \CDiv(Y)\to \CDiv(X)$. Two Cartier divisors are
linearly equivalent if their difference is a principle divisor. The
group of Cartier divisors modulo linear equivalence is called the
{\it Picard group} of $X$ and denoted by $\CDiv(X)$. One has an
intersection theory on $\Pic (X)$: for given Cartier divisors
$D_1,\dots, D_n$ on an $n$-dimensional complete variety  there is
an intersection index $[D_1,\dots,D_n]$ which obeys the usual
properties (see \cite{Fulton}).

Now let us return back to the subspaces of rational functions. For a
subspace $L\in \K(X)$, in general, the Kodaira map $\Phi_L$ is
a rational map, possibly not defined everywhere on $X$.

We denote the collection of subspaces $L\in \K(X)$
for which the rational
Kodaira map $\Phi_L:X \ratmap \p(L^*)$ extends to a regular map defined everywhere on $X$,
by $\K_{Cart}(X)$. We call a subspace $L \in \K(X)$ a {\bf subspaces with regular Kodaira map}.
One verifies that the collection $\K_{Cart}(X)$ is closed under
the multiplication. 

To a subspace $L \in \K_{Cart}(X)$ there naturally corresponds
a Cartier divisor $\mathcal{D}(L)$ as follows: each rational function $h\in L$
defines a hyperplane $H=\{ h=0\}$ in $\p(L^*)$. The divisor $\mathcal{D}(L)$ is
the difference of the pull-back divisor $\Phi ^*_L(H)$ and the
principal divisor $(h)$.

One proves the following:
\begin{Th} \label{th-DL-well-defined}
Let $X$ be a projective variety. Then:
(1) For any $L\in \K_{Cart}(X)$ the divisor $\mathcal{D}(L)$ is well-defined,
i.e. is independent of the choice of a function $h\in L$.
(2) The  map $L\mapsto \mathcal{D}(L)$ is a homomorphism from the semigroup
$\K_{Cart}(X)$ to the semigroup $\CDiv(X)$.
(3) The map $L\mapsto \mathcal{D}(L)$ preserves the intersection index, i.e.
for $ L_1,\dots, L_n\in \K_{Cart}(X)$ we have $$[L_1,\dots,L_n]=
[\mathcal{D}(L_1),\dots,\mathcal{D}(L_n)],$$ where the right-hand side is the
intersection index of Cartier divisors.
\end{Th}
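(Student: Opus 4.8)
The plan is to treat the three parts in order, throughout transporting hyperplanes on $\p(L^*)$ back to $X$ via the regular Kodaira map $\Phi_L\colon X \to \p(L^*)$. The basic dictionary is that a nonzero $h \in L$ corresponds to a linear form $\widehat{h}$ on $L^*$ (namely $\widehat{h}(\phi) = \phi(h)$), so that, by the very definition of $\Phi_L$, one has $\widehat{h}\circ \Phi_L = h$ as a rational function on $X$; consequently $H_h = \{\widehat{h}=0\}$ pulls back to a genuine Cartier divisor, since $h \not\equiv 0$ forces $\Phi_L(X) \not\subset H_h$.

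For part (1), I would take two nonzero $h, h' \in L$ and consider the rational function $\widehat{h}/\widehat{h'}$ on $\p(L^*)$, whose principal divisor is $H_h - H_{h'}$ and whose pullback under $\Phi_L$ is exactly $h/h'$. Because the pullback of a principal divisor along $\Phi_L$ equals the divisor of the pulled-back function, this gives $\Phi_L^*(H_h) - \Phi_L^*(H_{h'}) = (h) - (h')$, i.e. $\Phi_L^*(H_h) - (h) = \Phi_L^*(H_{h'}) - (h')$, which is precisely the asserted independence of the choice of $h$.

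For part (2), I would realize $\Phi_{LM}$ through the Segre map. The multiplication $\mu\colon L\otimes M \to LM$ is surjective, inducing a linear embedding $\p((LM)^*) \hookrightarrow \p((L\otimes M)^*)$; and for each point $x$ the evaluation functionals satisfy $\mathrm{ev}^{L}_x \otimes \mathrm{ev}^{M}_x = \mu^*(\mathrm{ev}^{LM}_x)$, so $\Phi_{LM}$ is the composite of $\Phi_L \times \Phi_M\colon X \to \p(L^*)\times\p(M^*)$ with the Segre embedding, landing in the subspace $\p((LM)^*)$ (regularity of $\Phi_{LM}$ being guaranteed by the closure of $\K_{Cart}(X)$ under multiplication). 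Choosing $h\in L$ and $h'\in M$ (permitted by part (1)), the decomposable form $\widehat{h\otimes h'}$ restricts to $\widehat{hh'}$ under $\mu^*$, and its Segre pullback is the hyperplane sum $(H_h\times\p(M^*)) + (\p(L^*)\times H_{h'})$. Pulling back along $\Phi_L\times\Phi_M$ then yields $\Phi_{LM}^*(H_{hh'}) = \Phi_L^*(H_h) + \Phi_M^*(H_{h'})$, and subtracting $(hh') = (h)+(h')$ gives $\mathcal{D}(LM) = \mathcal{D}(L)+\mathcal{D}(M)$. I expect the main obstacle to lie in this step: identifying $\p((LM)^*)$ as a linear subspace of $\p((L\otimes M)^*)$, verifying that the decomposable hyperplane class splits as the sum of the two factor classes under Segre, and checking that all relevant images avoid the hyperplanes so that every pullback is defined.

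For part (3), rather than argue transversality of generic hyperplane sections directly (which is delicate in positive characteristic), I would reduce to the self-intersection case by polarization. Both sides are symmetric and multi-additive --- the left by Theorem \ref{th-int-index-multiadd}, the right by multilinearity of the Cartier intersection index together with part (2) --- so each extends to a symmetric $\z$-valued multi-additive form $B$ with diagonal $Q(x)=B(x,\ldots,x)$. The polarization identity $n!\,B(x_1,\ldots,x_n) = \sum_{S}(-1)^{n-|S|}Q\!\left(\sum_{i\in S}x_i\right)$ shows that two such forms agreeing on the diagonal satisfy $n!\,B = n!\,B'$ and hence coincide, so it suffices to prove $[L,\ldots,L] = [\mathcal{D}(L),\ldots,\mathcal{D}(L)]$ for a single $L$. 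Since $\mathcal{D}(L)\sim \Phi_L^*(H_h)$ by part (1), the projection formula gives $[\mathcal{D}(L),\ldots,\mathcal{D}(L)] = (\deg\Phi_L)(\deg Y_L)$ when $\dim Y_L = \dim X$ and $0$ otherwise, where $\deg\Phi_L$ is the mapping degree and $Y_L\subset \p(L^*)$ is the image closure. This matches exactly the value of $[L,\ldots,L]$ supplied by Proposition \ref{prop-self-int-deg}, completing the argument.
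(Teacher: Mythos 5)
The paper does not actually prove Theorem \ref{th-DL-well-defined} here --- Section \ref{sec-regular-Kodaira} quotes it from \cite[Section 6]{KKh-MMJ} --- so there is no in-text proof to match you against; judged on its own, your proposal is essentially correct. Parts (1) and (2) are the expected arguments: (1) is the observation that $\Phi_L^*(H_h)-\Phi_L^*(H_{h'})$ is the principal divisor of $h/h'$, and (2) factors $\Phi_{LM}$ through the Segre embedding and uses that the decomposable hyperplane $\{\widehat{h}\otimes\widehat{h'}=0\}$ pulls back to $(H_h\times\p(M^*))+(\p(L^*)\times H_{h'})$; this is literally the same computation the authors carry out in their proof of Theorem \ref{th-int-index-multiadd} in Section \ref{sec-int-index-proofs}, so you are working with their toolkit. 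Part (3) is where you genuinely diverge from the natural route (which would be to note that $\mathcal{D}(L_i)$ is linearly equivalent to $\Phi_{L_i}^*(H_i)$ and identify the generic system $f_1=\cdots=f_n=0$ with the intersection of $Y_{\bf L}$ with a generic multi-hyperplane ${\bf H}$, i.e.\ to re-run the proof of Theorem \ref{th-int-index-well-def}). Your polarization identity is valid here: both forms are symmetric and multi-additive on the semigroup, the identity only requires additivity and inclusion--exclusion, and $n!\,B=n!\,B'$ forces $B=B'$ for $\z$-valued forms. Reducing (3) to the diagonal this way is a clean economy.

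One point you must make explicit in part (3), since you advertise it as the characteristic-robust step: the projection formula gives $[\Phi_L^*H,\ldots,\Phi_L^*H]=[\k(X):\k(Y_L)]\cdot\deg Y_L$, where the degree is the full field-extension degree, whereas the ``mapping degree'' in Proposition \ref{prop-self-int-deg} is naturally read off as the number of points in a general fibre, i.e.\ the separable degree. In characteristic zero these agree and your argument closes; in characteristic $p$ they can differ by the inseparable degree (already for $L=\langle 1,x^p\rangle$ on $\p^1$ the solution count is $1$ while the divisor $\mathcal{D}(L)$ has degree $p$). This tension is inherited from the paper's own identification of the solution count with a Chow-ring intersection number rather than introduced by you, but as written your final step silently conflates the two degrees, and you should either state that you take Proposition \ref{prop-self-int-deg} with the field-theoretic degree or restrict the claim of part (3) to the separable case.
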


Conversely, to any Cartier divisor we can associate a subspace of rational functions.
The subspace $\mathcal{L}(D)$ associated to a Cartier
divisor $D$ is the collection of all rational functions $f$ such that
the divisor $(f)+ D$ is effective (by definition $0 \in L(D)$).

The following well-known fact can be found in \cite[Chap. 2, Theorem
5.19]{Hartshorne}.
\begin{Th} \label{th-LD-finite}
When $X$ is projective
$\mathcal{L}(D)$ is finite dimensional.
\end{Th}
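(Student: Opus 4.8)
The plan is to identify $\mathcal{L}(D)$ with the space of global sections of a coherent sheaf on $X$ and then to invoke Serre's finiteness theorem. To the Cartier divisor $D$ one attaches the sheaf $\mathcal{O}_X(D)$ whose sections over an open set $U \subseteq X$ are
$$\Gamma(U, \mathcal{O}_X(D)) = \{ f \in \k(X) \mid ((f) + D)|_U \geq 0 \} \cup \{ 0 \}.$$
Here it is essential that $D$ is Cartier rather than merely a Weil divisor: locally $D$ is cut out by a single rational function $g$, and then the condition $(f)+D \geq 0$ reads $fg \in \mathcal{O}_X$, so $\mathcal{O}_X(D)$ is locally free of rank one, generated by $g^{-1}$, and in particular coherent. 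Taking $U = X$ gives exactly $\Gamma(X, \mathcal{O}_X(D)) = \mathcal{L}(D)$, so the assertion is reduced to the finite-dimensionality of the global sections of a coherent sheaf on a projective variety over $\k$.

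That last statement is Serre's finiteness theorem for coherent sheaves on projective schemes, which is precisely the cited \cite[Chap.~2, Theorem~5.19]{Hartshorne} specialized to the base ring $A = \k$: for any coherent sheaf $\mathcal{F}$ on $X$ the $\k$-module $\Gamma(X, \mathcal{F})$ is finitely generated, hence finite-dimensional. Applying this to $\mathcal{F} = \mathcal{O}_X(D)$ finishes the argument. From this vantage point essentially all of the content sits in the general coherence theorem, and the only genuine work is the routine identification $\mathcal{L}(D) = \Gamma(X, \mathcal{O}_X(D))$ described above.

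If one instead wanted a self-contained argument, I would induct on $n = \dim X$. The base case $n = 0$ is immediate, since then $X$ is a finite set of points and $\k(X)$ is already a finite-dimensional $\k$-algebra containing $\mathcal{L}(D)$. For the inductive step choose a general hyperplane section $H$, so that $Y = X \cap H$ is an irreducible projective variety of dimension $n-1$; twisting the sequence $0 \to \mathcal{O}_X(-H) \to \mathcal{O}_X \to \mathcal{O}_Y \to 0$ by the locally free sheaf $\mathcal{O}_X(D)$ yields $0 \to \mathcal{O}_X(D-H) \to \mathcal{O}_X(D) \to \mathcal{O}_Y(D|_Y) \to 0$, whence $\dim_\k \mathcal{L}_X(D) \leq \dim_\k \mathcal{L}_X(D-H) + \dim_\k \mathcal{L}_Y(D|_Y)$. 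Iterating this bound telescopes into $\dim_\k \mathcal{L}_X(D) \leq \sum_{j=0}^{m-1} \dim_\k \mathcal{L}_Y((D-jH)|_Y) + \dim_\k \mathcal{L}_X(D - mH)$, in which each summand is finite by the inductive hypothesis on $Y$. The decisive point — and the main obstacle — is the vanishing $\mathcal{L}_X(D - mH) = 0$ for $m \gg 0$: a nonzero $f$ there would give an effective divisor $(f) + D \geq mH$, but restricting to a general complete-intersection curve $C$ and using that a principal divisor has degree $0$ on a projective curve forces $\deg_C D \geq m \, \deg_C H$ with $\deg_C H > 0$, which is impossible once $m$ is large. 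Assembling the genericity of $H$, this vanishing, and the telescoping bookkeeping is exactly what makes the elementary route amount to re-deriving Serre's theorem for $H^0$, which is why the shortest honest proof is simply to cite it.
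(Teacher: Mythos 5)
Your main argument is exactly the paper's: the paper offers no proof of its own and simply cites Hartshorne, Chap.~2, Theorem~5.19, which is the Serre finiteness statement you invoke after the (correct) identification $\mathcal{L}(D) = \Gamma(X, \mathcal{O}_X(D))$ for a Cartier divisor $D$. Your additional self-contained sketch via hyperplane sections is a reasonable outline but is not needed for comparison, since the paper's ``proof'' is precisely the citation you give.
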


We record the following facts which are direct corollaries of the definition.
\begin{Prop} 
Let $L\in \K_{Cart} (X)$ and put $D = \mathcal{D}(L)$.
Then $L\subset \mathcal{L}(D)$ and $\mathcal{L}(D)\in \K_{Cart} (X)$.
\end{Prop}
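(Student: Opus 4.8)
The plan is to establish the two assertions separately. The inclusion $L\subset\mathcal{L}(D)$ will be a one-line divisor computation exploiting the fact that $\mathcal{D}(L)$ may be computed using \emph{any} function of $L$ (Theorem~\ref{th-DL-well-defined}(1)). The membership $\mathcal{L}(D)\in\K_{Cart}(X)$ will be reduced, via Theorem~\ref{th-LD-finite}, to showing that the Kodaira map $\Phi_{\mathcal{L}(D)}$ is regular at every point; for this I will use that $L$ already generates the invertible sheaf $\mathcal{O}_X(D)$ everywhere (this being exactly the content of $L\in\K_{Cart}(X)$), together with the inclusion $L\subset\mathcal{L}(D)$ just proved.

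For the first claim, fix a nonzero $f\in L$ and let $H_f=\{f=0\}\subset\p(L^*)$ be the hyperplane it defines. By Theorem~\ref{th-DL-well-defined}(1) I may use $h=f$ in the definition of $\mathcal{D}(L)$, so that $\mathcal{D}(L)=\Phi_L^*(H_f)-(f)$, whence
$$(f)+\mathcal{D}(L)=(f)+\bigl(\Phi_L^*(H_f)-(f)\bigr)=\Phi_L^*(H_f)\ge 0.$$
The pullback $\Phi_L^*(H_f)$ is a genuine effective divisor because $f\neq 0$ forces $Y_L\not\subset H_f$, so the regular map $\Phi_L$ does not send $X$ into the support of $H_f$. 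Thus $(f)+D$ is effective, i.e.\ $f\in\mathcal{L}(D)$, and since $0\in\mathcal{L}(D)$ by convention we conclude $L\subset\mathcal{L}(D)$.

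For the second claim, $\mathcal{L}(D)$ is finite dimensional by Theorem~\ref{th-LD-finite}, so $\mathcal{L}(D)\in\K(X)$. To see that its Kodaira map is everywhere regular, fix $x\in X$ and, using that $D$ is Cartier, choose a local equation $\psi$ for $D$ near $x$ (so $(\psi)=D$ in a neighborhood of $x$). Then for every $g\in\mathcal{L}(D)$ the product $g\psi$ is regular near $x$, since $(g\psi)=(g)+D\ge 0$ there. Now $\Phi_L$ is a morphism, so $L$ generates the line bundle $\Phi_L^*\mathcal{O}(1)=\mathcal{O}_X(D)$ at every point; in particular there is $f\in L$ with $(f\psi)(x)\neq 0$. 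Expanding this $f$ in a basis $g_0,\dots,g_N$ of $\mathcal{L}(D)$ (permissible since $L\subset\mathcal{L}(D)$), the functions $g_0\psi,\dots,g_N\psi$ are the local coordinates of $\Phi_{\mathcal{L}(D)}$ near $x$; they are regular there and cannot all vanish at $x$, for otherwise $f\psi=\sum_j c_j\,g_j\psi$ would vanish at $x$. Hence $\Phi_{\mathcal{L}(D)}$ is regular at $x$, and as $x$ was arbitrary, $\mathcal{L}(D)\in\K_{Cart}(X)$.

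The only substantive step is the second one, and its heart is the translation of ``$\Phi_L$ is a morphism'' into the concrete local statement that some $f\in L$ has $(f\psi)(x)\neq 0$, i.e.\ that $L$ generates the stalk of $\mathcal{O}_X(D)=\Phi_L^*\mathcal{O}(1)$ at $x$. Once this is in place, the passage to the larger system $\mathcal{L}(D)\supseteq L$ is automatic, since enlarging a base-point-free system can only remove base points. I expect the care to lie entirely in conventions: one must check that $\mathcal{O}_X(D)=\Phi_L^*\mathcal{O}(1)$ and that $\psi$ furnishes the correct local trivialization. I would also emphasize that it is precisely the Cartier property of $D$ that supplies the local equation $\psi$, so the argument requires no normality or smoothness of $X$ and is valid in arbitrary characteristic.
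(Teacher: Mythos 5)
Your proof is correct and is precisely the unwinding of the definitions that the paper intends: the paper states this proposition without proof, calling it a direct corollary of the definition of $\mathcal{D}(L)$ and $\mathcal{L}(D)$. Both halves of your argument --- taking $h=f$ in $\mathcal{D}(L)=\Phi_L^*(H_h)-(h)$ so that $(f)+D=\Phi_L^*(H_f)\ge 0$, and using a local equation of $D$ together with the base-point-freeness of $L$ (i.e.\ regularity of $\Phi_L$) to conclude that the larger system $\mathcal{L}(D)\supseteq L$ has a regular Kodaira map --- are the standard and intended ones.
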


\begin{Prop} \label{prop-LD-birat}
Let $\rho: X' \to X$ be a birational morphism. Let $D$ be a Cartier divisor on $X$. Then
$$\L(\rho^*(D)) = \rho^*(\L(D)).$$
\end{Prop}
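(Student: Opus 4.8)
The plan is to reduce the claimed equality of subspaces to a single statement about effectivity of Cartier divisors. Since $\rho$ is birational, $\rho^*$ is an isomorphism of function fields $\k(X) \to \k(X')$, and I will use it to identify the two fields, so that the pullback of a rational function $f$ on $X$ is simply $f$ regarded as a function on $X'$. Under this identification both $\rho^*(\L(D))$ and $\L(\rho^*(D))$ are subsets of the common field $\k(X) = \k(X')$, and the equality to be proved reads: for $f \in \k(X)$ one has $(f) + D \geq 0$ on $X$ if and only if $(f) + \rho^*(D) \geq 0$ on $X'$. Because the pullback of a principal divisor is again principal, $\rho^*((f)) = (f)$ as divisors on $X'$, and by additivity of $\rho^*$ we get $(f) + \rho^*(D) = \rho^*((f) + D)$. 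Writing $E = (f) + D$, the proposition is therefore equivalent to the assertion that a Cartier divisor $E$ on $X$ is effective if and only if $\rho^*(E)$ is effective.

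The implication ``$E \geq 0 \Rightarrow \rho^*(E) \geq 0$'' is immediate and yields the inclusion $\rho^*(\L(D)) \subseteq \L(\rho^*(D))$: if $E$ is locally represented as $\div(\phi)$ with $\phi$ regular, then $\rho^*(E)$ is locally $\div(\phi \circ \rho)$ with $\phi \circ \rho$ regular, so $\rho^*(E)$ is effective. The entire content of the proposition lies in the converse inclusion $\L(\rho^*(D)) \subseteq \rho^*(\L(D))$, i.e. in deducing $E \geq 0$ from $\rho^*(E) \geq 0$.

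For this I would argue along prime divisors, working with normal models (as one may in the context of b-divisors), where a Cartier divisor is effective precisely when its order along every prime divisor is non-negative. Given a prime divisor $P$ of $X$, its strict transform $\tilde P \subset X'$ is a prime divisor whose associated valuation of $\k(X') = \k(X)$ coincides with that of $P$: since $\rho$ is birational, the local rings $\mathcal{O}_{X, P}$ and $\mathcal{O}_{X', \tilde P}$ are discrete valuation rings with the same fraction field, one dominating the other, hence equal. Consequently $\ord_{\tilde P}(\rho^*(E)) = \ord_P(E)$, and from $\rho^*(E) \geq 0$ we read off $\ord_P(E) = \ord_{\tilde P}(\rho^*(E)) \geq 0$ for every $P$, which gives $E \geq 0$.

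The step I expect to be the main obstacle, and where a hypothesis genuinely enters, is guaranteeing that every prime divisor $P$ of $X$ possesses such a strict transform detecting its coefficient; this is where I would use that $\rho$ is proper, as it is for the morphisms between the projective models relevant to b-divisors. Properness cannot simply be dropped: for the open immersion $X' = X \setminus P \hookrightarrow X$ one has $\rho^*(-P) = 0 \geq 0$ while $-P$ is not effective. Equivalently, the whole argument can be packaged sheaf-theoretically through the identification $\L(D) = H^0(X, \mathcal{O}_X(D))$: the projection formula together with the equality $\rho_* \mathcal{O}_{X'} = \mathcal{O}_X$, valid for a proper birational morphism onto a normal variety, gives $H^0(X', \rho^*\mathcal{O}_X(D)) = H^0(X, \mathcal{O}_X(D))$, which is exactly the asserted identity. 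In either formulation the crux is the input that $\rho$ creates no new regular sections, encoded precisely by properness together with normality.
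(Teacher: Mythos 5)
The paper offers no proof of this proposition at all --- it is filed under ``direct corollaries of the definition'' --- so the only question is whether your argument is sound, and in substance it is. Your reduction of the equality to the single claim ``a Cartier divisor $E$ on $X$ is effective if and only if $\rho^*(E)$ is effective,'' via the identification $\k(X)\cong\k(X')$ and $\rho^*((f))=(f)$, is the right move; the inclusion $\rho^*(\L(D))\subseteq\L(\rho^*(D))$ is indeed immediate, and your argument for the converse --- every prime divisor $P$ of $X$ is dominated by a prime divisor of $X'$ whose valuation on the common function field coincides with $\ord_P$, because two discrete valuation rings of the same field, one dominating the other, are equal --- is correct for a proper birational morphism of normal varieties, as is the alternative packaging via $\rho_*\mathcal{O}_{X'}=\mathcal{O}_X$ and the projection formula.

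Your insistence that properness and normality genuinely enter is not pedantry but an actual correction to the statement as printed. Properness: your open-immersion example already refutes the literal statement. Normality: reading $\L(D)$ as $H^0(X,\mathcal{O}_X(D))$, the normalization $\rho:\p^1\to X$ of a cuspidal plane cubic with $D$ a hyperplane section gives $\dim\L(D)=3$ (arithmetic genus $1$, degree $3$) while $\dim\L(\rho^*(D))=4$, so the conclusion fails even for a proper birational morphism when $X$ is not normal. In the paper the proposition is only ever invoked for a morphism $\rho: X_{\pi'}\to X_{\pi}$ between projective birational models, so properness is automatic, but normality of the models is nowhere imposed; your proof therefore covers the case the paper actually needs only after one also arranges, as one may by normalizing (cf.\ Lemma \ref{lem-extend-Kodaira}), that the models are normal. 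The one step you state as an expectation rather than prove --- that every prime divisor $P$ of $X$ admits a dominating prime divisor upstairs --- is exactly where properness is consumed: $\rho$ is then surjective, some irreducible component $Q$ of $\rho^{-1}(P)$ dominates $P$, and since $Q\subsetneq X'$ while $\dim Q\geq\dim P=n-1$, that component is a prime divisor. With that sentence added, your argument is a complete proof of the statement the paper actually uses.
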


One verifies that $D$ is a very ample divisor if
$\mathcal{D}(\mathcal{L}(D))=D$. It is a well-known fact that the group of Cartier divisors is generated by the 
very ample divisors (see \cite[Example 1.2.6]{Lazarsfeld}).

Let $L\in \K_{Cart}(X)$. The following describes the
subspace $\mathcal{L}(\mathcal{D}(L))$. It can be found in slightly different forms in
\cite[Chap. 2, Proof of Theorem 5.19]{Hartshorne} and \cite[Appendix 4]{Zariski}.
\begin{Th} \label{th-LDL}
Let $X$ be an irreducible projective variety and let $L\in
\K_{Cart} (X)$ be such that the Kodaira map $\Phi_L: X \to
\p(L^*)$ is an embedding. Then:
(1) every element of $\mathcal{L}(\mathcal{D}(L))$ is integral over $L$, i.e.
$L \subset \mathcal{L}(\mathcal{D}(L))\subset \overline{L}$ and hence $\mathcal{L}(\mathcal{D}(L)) \sim L$,
moreover (2) if $X$ is normal then $\mathcal{L}(\mathcal{D}(L))= \overline{L}.$
\end{Th}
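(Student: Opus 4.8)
The plan is to realize both $\mathcal{L}(\mathcal{D}(L))$ and $\overline{L}$ inside the section ring of the very ample line bundle defined by the embedding $\Phi_L$, and to compare them there. Since $\Phi_L : X \to \p(L^*)$ is an embedding, I would identify $X$ with its image and set $\mathcal{M} = \Phi_L^*\mathcal{O}(1)$, a very ample line bundle on $X$. Fix a basis $f_0, \ldots, f_N$ of $L$; it gives homogeneous coordinates on $\p(L^*)$ and sections $s_0, \ldots, s_N \in H^0(X, \mathcal{M})$ with $s_i/s_0 = f_i/f_0$. Writing $A = \bigoplus_m A_m$ for the homogeneous coordinate ring of the image (so $A_m$ is the image of $\Sym^m L$, generated in degree one by the $s_i$) and $S = \bigoplus_m H^0(X, \mathcal{M}^m)$ for the full section ring, the assignment $g \mapsto \tilde{g} = (g/f_0)\, s_0$ sets up a degree-preserving dictionary: it carries $f_i$ to $s_i$, identifies $L$ with $A_1$, identifies $\mathcal{L}(\mathcal{D}(L))$ with $S_1 = H^0(X,\mathcal{M})$ (using $\mathcal{D}(L) = \div(s_0) - (f_0)$, so that $(g)+\mathcal{D}(L)\ge 0$ iff $\tilde{g}$ is a regular section), and, crucially, turns an integral relation $g^m + \sum_{i\ge 1} c_i g^{m-i} = 0$ with $c_i \in L^i$ into a homogeneous integral relation for $\tilde{g}$ over $A$ (multiply through by $(s_0/f_0)^m$, which sends $c_i \in L^i$ to an element of $A_i$). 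Thus ``$g$ is integral over $L$'' is equivalent to ``$\tilde{g} \in (\overline{A})_1$'', and $\overline{L}$ is identified with the degree-one part of the integral closure $\overline{A}$ of $A$ in its fraction field.

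With this dictionary the two assertions become statements about $A \subseteq S \subseteq \overline{A}$ in degree one. For part (1), the inclusion $L \subseteq \mathcal{L}(\mathcal{D}(L))$ is the Proposition recorded above, while $\mathcal{L}(\mathcal{D}(L)) \subseteq \overline{L}$ amounts to $S_1 \subseteq (\overline{A})_1$: here I would invoke the standard fact that $S$ is a finitely generated $A$-module (by Serre vanishing $A_m = S_m$ for $m \gg 0$, so that $S = A + \bigoplus_{m < m_0} S_m$ with the latter finite-dimensional), hence integral over $A$, so that every homogeneous element of $S$ lies in $\overline{A}$. To finish part (1) I would choose, using $\overline{L} \sim L$ from Theorem \ref{th-completion}, a subspace $N$ with $L N = \overline{L}\, N$; multiplying the chain $L \subseteq \mathcal{L}(\mathcal{D}(L)) \subseteq \overline{L}$ by $N$ forces $\mathcal{L}(\mathcal{D}(L))\, N = L N$, that is, $\mathcal{L}(\mathcal{D}(L)) \sim L$.

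For part (2) it suffices to prove $S = \overline{A}$ when $X$ is normal, and then read off the degree-one part. Since $A \subseteq S$ share the same fraction field and $S$ is integral over $A$ by part (1), I only need that $S$ is integrally closed. The cleanest route is to observe that $S = \Gamma\bigl(V, \mathcal{O}_V\bigr)$ where $V = \spec_X\bigl(\bigoplus_{m\ge 0}\mathcal{M}^m\bigr)$ is the total space of a line bundle over $X$; as an $\A^1$-bundle over the normal variety $X$ it is itself normal, and the ring of global regular functions on a normal integral variety is integrally closed in its fraction field. Hence $S = \overline{A}$, and taking degree-one parts gives $\mathcal{L}(\mathcal{D}(L)) = \overline{L}$.

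The main obstacle is part (2), specifically the normality of the section ring $S$; everything in part (1) is formal once the finiteness of $S$ over $A$ is granted. One must be careful to compare the integral closure and the section ring respecting the grading, so that the degree-one identifications are legitimate, and to run the normality argument on $X$ itself rather than on the possibly non-normal affine cone $\spec A$ — this is precisely why passing to the line-bundle total space $V$, rather than to the cone, is the right move.
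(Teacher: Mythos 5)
Your proof is correct, and it is essentially the argument the paper itself defers to: the paper gives no in-text proof of this theorem, citing instead \cite[Chap.~2, Proof of Theorem 5.19]{Hartshorne} and \cite[Appendix 4]{Zariski}, and your dictionary --- identifying $\mathcal{L}(\mathcal{D}(L))$ with $H^0(X,\mathcal{M})=S_1$, identifying $\overline{L}$ with the degree-one part of the integral closure of the homogeneous coordinate ring $A$, using $A_m=S_m$ for $m\gg 0$ to get integrality of $S$ over $A$, and using normality of the line-bundle total space for part (2) --- is precisely the content of those references. The only step worth spelling out is the graded bookkeeping you already flag: a degree-one element of $S$ integral over $A$ admits a \emph{homogeneous} integral dependence relation (take the top graded component of an arbitrary one), which is what justifies reading $\overline{L}$ off as $(\overline{A})_1$ and hence both inclusions $S_1\subset(\overline{A})_1$ and $(\overline{A})_1\subset S_1$.
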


\section{b-divisors} \label{sec-b-divisor}
Let $X$ be an irreducible variety of dimension $n$ defined over an algebraically closed field $\k$. 

\begin{Def}[Birational model] \label{def-birat-model}
We call a proper birational map $\pi: X_\pi \ratmap X$, where $X_\pi$ is a  projective variety, a 
{\bf projective birational model} (or for short a birational model) of $X$. The collection of all models of $X$ modulo isomorphism is partially ordered: 
We say $(X_{\pi'}, \pi')$ {\bf dominates} $(X_\pi, \pi)$ and write $\pi' \geq \pi$ if there is a morphism $\rho: X_{\pi'} \to X_{\pi}$ such that $\pi' = \pi \circ \rho$.
\end{Def}

\begin{Prop} \label{prop-birat-inductive}
The above partial order is inductive, i.e. for any two models $(X_\pi, \pi)$ and $(X_{\pi'}, \pi')$ there exists a third model
$(X_{\pi''}, \pi'')$ which dominates both. 
\end{Prop}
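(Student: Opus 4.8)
The plan is to realize $X_{\pi''}$ as the closure of the graph of the natural rational map from $X$ into the product $X_\pi \times X_{\pi'}$, with $\rho, \rho'$ the two projections. Since $\pi$ and $\pi'$ are birational, the inverse rational maps $\pi^{-1}: X \ratmap X_\pi$ and $\pi'^{-1}: X \ratmap X_{\pi'}$ are defined on a common dense open subset $U \subseteq X$, on which each is an isomorphism onto a dense open subset of its target. I would then form the morphism $\delta = (\pi^{-1}, \pi'^{-1}): U \to X_\pi \times X_{\pi'}$ and set $X_{\pi''}$ to be the Zariski closure of $\delta(U)$ inside $X_\pi \times X_{\pi'}$. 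Because $X_\pi$ and $X_{\pi'}$ are projective, their product is projective (via the Segre embedding), so the closed subvariety $X_{\pi''}$ is projective; and since $U$ is irreducible, $X_{\pi''}$ is irreducible, hence a variety.

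Next I would take $\rho$ and $\rho'$ to be the restrictions to $X_{\pi''}$ of the projections $\textup{pr}_1: X_\pi \times X_{\pi'} \to X_\pi$ and $\textup{pr}_2: X_\pi \times X_{\pi'} \to X_{\pi'}$. These are genuine everywhere-defined morphisms precisely because they are restrictions of honest projection morphisms to a closed subvariety. To see that $\rho$ is birational, note that on $\delta(U)$ one has $\rho \circ \delta = \pi^{-1}$, so $\rho$ carries the dense open $\delta(U) \subseteq X_{\pi''}$ isomorphically onto the dense open $\pi^{-1}(U) \subseteq X_\pi$; thus $\rho: X_{\pi''} \to X_\pi$ is a birational morphism, and the same argument gives that $\rho'$ is birational. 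Setting $\pi'' = \pi \circ \rho: X_{\pi''} \ratmap X$, the composite is birational since both factors are, so $(X_{\pi''}, \pi'')$ is a legitimate projective birational model of $X$ in the sense of Definition \ref{def-birat-model}.

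It then remains to verify the two domination relations. By construction $\pi'' = \pi \circ \rho$, which gives $\pi'' \geq \pi$. For the other relation it suffices to check that $\pi \circ \rho = \pi' \circ \rho'$ as rational maps; since both are rational maps out of the irreducible variety $X_{\pi''}$, it is enough that they agree on the dense open set $\delta(U)$. There, for $y \in U$, one computes $(\pi \circ \rho)(\delta(y)) = \pi(\pi^{-1}(y)) = y$ and $(\pi' \circ \rho')(\delta(y)) = \pi'(\pi'^{-1}(y)) = y$, so the two compositions coincide. Hence $\pi'' = \pi' \circ \rho'$, giving $\pi'' \geq \pi'$ as well, and $(X_{\pi''}, \pi'')$ dominates both models.

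The construction is elementary, and the only points that need care — which I view as the crux rather than a genuine obstacle — are that $\rho$ and $\rho'$ are everywhere-defined morphisms (guaranteed by restricting the projections to the closed subvariety $X_{\pi''}$, not by composing rational maps) and that these restricted projections are birational onto the respective models (guaranteed by the graph-closure structure, i.e. that $\delta$ is a section of each projection over $U$). Projectivity of $X_{\pi''}$ and the equality of the two composites follow formally, the latter by agreement on the dense open $\delta(U)$.
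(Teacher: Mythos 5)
Your proof is correct and follows essentially the same route as the paper: take the closure of the graph of $(\pi^{-1},\pi'^{-1})$ over a common open set $U$ and use the projections as the dominating morphisms. The only (harmless) difference is that you take the closure inside $X_\pi \times X_{\pi'}$ and define $\pi''$ as $\pi\circ\rho$, whereas the paper closes up inside $\p^N \times X_\pi \times X_{\pi'}$ so that the rational map to $X$ is also a projection; your variant even avoids the appeal to Chow's lemma.
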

\begin{proof}
By Chow's lemma, without loss of generality we can assume that $X$ is quasi-projective sitting in some projective space $\p^N$.
Let $U \subset X$ be an open subset such that $\pi$ and $\pi'$ are isomorphisms restricted to $\pi^{-1}(U)$ and 
$\pi'^{-1}(U)$ respectively. Consider the set 
$$\Gamma = \{ (x, \pi^{-1}(x), \pi'^{-1}(x)) \mid x \in U\} \subset X \times X_\pi \times X_{\pi'},$$
and let $X_{\pi''}$ be the Zariski closure of $\Gamma$ in $\p^N \times X_\pi \times X_{\pi'}$.
The morphisms to $X_\pi$ and $X_{\pi'}$ as well as the rational map to $X$ are given by the projections on the corresponding factors.
\begin{equation} \label{equ-dominate}
\xymatrix{
& X_{\pi} \ar@{-->}[dr]^{\pi'} & \\
X_{\pi''} \ar[ur]^{\rho}  \ar@{-->}[rr]^{\pi''} \ar[dr]^{\rho'} & & X \\
& X_{\pi'} \ar@{-->}[ur]^{\pi'} & \\
}
\end{equation}
\end{proof}

\begin{Def}
The {\bf Riemann-Zariski space} $\X$ of the birational class of $X$ is defined as the limit
$$\X = \lim_{\leftarrow_\pi} X_\pi,$$
where the limit is taken over all the birational models of $X$. 
\end{Def}

\begin{Def}[b-divisor]
Following Shokurov one defines the group of {\bf Cartier b-divisors} as
$$\CDiv(\X) = \lim_{\rightarrow_\pi} \CDiv(X_\pi),$$ where $\CDiv(X_\pi)$ denotes the group of Cartier divisors
on the variety $X_\pi$ and the limit is taken with respect to the pull-back maps 
$\CDiv(X_\pi) \to \CDiv(X_{\pi'})$ which are defined whenever $\pi' \geq \pi$.
\end{Def}

\begin{Rem}
Let $D_1, \ldots, D_n$ be Cartier divisors on a model $X_\pi$ and suppose $\pi' \geq \pi$. One shows that 
the intersection number of the $D_i$ on $X_\pi$ is equal to the intersection number of the their pull-backs to
$X_{\pi'}$. This shows that the intersection number of b-divisors is well-defined.
\end{Rem}

\section{Main statement} \label{sec-main}
Let $X$ be an irreducible variety over $\k$. In this section we prove the main statement of this note that the group of 
b-divisors is naturally isomorphic to the Grothendieck group of $\K(X)$ (Theorem \ref{th-main}). The proof is based on the following easy lemma:

\begin{Lem} \label{lem-extend-Kodaira}
Let $L \in \K(X)$ be a non-zero finite dimensional subspace with the Kodaira rational map $\Phi_L: X \ratmap \p(L^*)$.
Then there exists a normal projective birational model $X_\pi$ of $X$ such that $\Phi_L \circ \pi$ extends to a regular map on 
the whole $X_\pi$. In other words, the Kodaira map of the subspace $\pi^*(L)$ extends to a regular map on 
$X_\pi$.
\end{Lem}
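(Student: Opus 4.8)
The plan is to realize the indeterminacy locus of $\Phi_L$ as something we can blow up, and then normalize. Concretely, $\Phi_L$ is a rational map $X \ratmap \p(L^*)$, and the obstruction to it being regular is precisely the base locus, i.e. the set of points where all functions in $L$ simultaneously fail to give a well-defined functional (after clearing a common pole divisor). My first step would be to reduce to the projective setting: using Chow's lemma (as in the proof of Proposition \ref{prop-birat-inductive}) I may replace $X$ by a projective variety without changing the birational class, so it suffices to resolve the indeterminacy of a rational map between projective varieties.

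The main construction I would use is the \emph{closure of the graph}. Choose a basis $f_0, \ldots, f_m$ of $L$; then $\Phi_L$ is given in homogeneous coordinates by $x \mapsto (f_0(x) : \cdots : f_m(x))$ on the open set $U \subset X$ where this is defined and not all $f_i$ vanish. Let $\Gamma \subset X \times \p(L^*)$ be the Zariski closure of the graph $\{(x, \Phi_L(x)) \mid x \in U\}$, and let $\rho: \Gamma \to X$ be the first projection. Then $\rho$ is a proper birational morphism (it is an isomorphism over $U$ and is projective since $\p(L^*)$ is), the second projection $\Gamma \to \p(L^*)$ is a regular morphism, and by construction this second projection agrees with $\Phi_L \circ \rho$ wherever the latter is defined. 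Thus on $\Gamma$ the Kodaira map is everywhere regular. Finally I would take $X_\pi$ to be the normalization of $\Gamma$ with $\pi$ the composite $\widetilde{\Gamma} \to \Gamma \xrightarrow{\rho} X$; normalization is a finite (hence proper) birational morphism, it does not destroy regularity of a morphism to a variety, and it yields the required \emph{normal} projective model.

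The last point to verify is the identification of the second projection with the Kodaira map of the pulled-back subspace $\pi^*(L)$. This is essentially a bookkeeping step: the pull-back $\pi^*(L)$ is spanned by the $\pi^*(f_i)$, its Kodaira map sends a point $y \in X_\pi$ to the functional $f \mapsto (\pi^* f)(y)$, and under the basis $f_0, \ldots, f_m$ this is exactly $(\pi^* f_0(y) : \cdots : \pi^* f_m(y))$, which is what the regular projection $\Gamma \to \p(L^*)$ computes. So the two maps coincide and $\pi^*(L)$ has everywhere-regular Kodaira map, i.e. $\pi^*(L) \in \K_{Cart}(X_\pi)$.

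I expect the only genuine subtlety to be checking that the graph closure really does resolve the indeterminacy everywhere rather than merely shrinking the base locus — one must confirm that the fiber of $\rho$ over an indeterminacy point of $X$ maps, via the second projection, to honest points of $\p(L^*)$, which follows because $\Gamma$ is the closure of a graph inside the \emph{product} and the second projection is defined on all of the product. Passing to the normalization is harmless since normalization is birational and projective varieties stay projective under it, and a morphism to a separated scheme remains a morphism after precomposition with the finite normalization map. Everything else is routine, so the argument is short, matching the "easy lemma" billing.
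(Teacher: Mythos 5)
Your proposal is correct and follows essentially the same route as the paper's proof: reduce to $X$ projective, take the closure of the graph of $\Phi_L$ in $X \times \p(L^*)$ so that the second projection gives the everywhere-regular Kodaira map of $\pi^*(L)$, and finally normalize. The extra care you take in identifying the second projection with $\Phi_{\pi^*(L)}$ and in noting that normalization preserves regularity is a welcome elaboration of steps the paper leaves implicit.
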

\begin{proof}
Every irreducible variety is birationally isomorphic to a projective variety. 
So without loss of generality we assume that $X$ is projective.
Let $U \subset X$ be an open subset such that ${\Phi_L}_{| U}$ is regular. Let $\Gamma \subset U \times \p(L^*)$
be the graph of ${\Phi_L}_{|U}$ and let $X_\pi$ be the closure of $\Gamma$ in $X \times \p(L^*)$. Let 
$\pi: X_\pi \to X$ denote the projection on the first factor. The map $\pi_{| \Gamma}: \Gamma \to U$
is an isomorphism with inverse $x \mapsto (x, \Phi_L(x))$. Thus $X_\pi$ is birationally isomorphic to $X$. Also the Kodaira map
$\Phi_L$ on $X$ extends to $\Phi_{\pi*(L)}$ which is the projection on the second factor and hence defined on the whole $X_\pi$.
If $X$ is not normal, replace $X_\pi$ with its normalization.
\end{proof}

From Theorem \ref{th-DL-well-defined} and Lemma \ref{lem-extend-Kodaira} we get the following.
\begin{Th} \label{th-main}
The group of b-divisors $\CDiv(\X)$ is naturally isomorphic to the Grothendieck group $\G(X)$ of $\K(X)$. 
Moreover, the isomorphism preserves the intersection index.
\end{Th}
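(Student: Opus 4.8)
The plan is to construct two mutually inverse homomorphisms $\Psi\colon \G(X)\to \CDiv(\X)$ and $\Phi\colon \CDiv(\X)\to \G(X)$, and then to observe that $\Psi$ intertwines the two intersection indices. To build $\Psi$, I would start from $L\in\K(X)$ and use Lemma \ref{lem-extend-Kodaira} to pass to a normal projective model $\pi\colon X_\pi\ratmap X$ on which $\pi^*(L)\in\K_{Cart}(X_\pi)$. Then $\mathcal{D}(\pi^*(L))$ is a well-defined Cartier divisor (Theorem \ref{th-DL-well-defined}) and so gives a class in the direct limit $\CDiv(\X)$. Independence of the chosen model follows from Proposition \ref{prop-birat-inductive}: given two such models, a common dominating model $X_{\pi''}$ exists, and the functoriality $\mathcal{D}(\rho^*M)=\rho^*\mathcal{D}(M)$ for a morphism $\rho$ of models (immediate from $\Phi_{\rho^*M}=\Phi_M\circ\rho$ and the definition of $\mathcal{D}$) shows the two classes agree in the limit. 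Since $\mathcal{D}$ is additive (Theorem \ref{th-DL-well-defined}(2)), the assignment $L\mapsto\mathcal{D}(\pi^*L)$ is a semigroup homomorphism into the group $\CDiv(\X)$, and the universal property of the Grothendieck group produces $\Psi$.

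For $\Phi$ I would use that $\CDiv(X_\pi)$ is generated by very ample divisors. For very ample $D$ on a normal model $X_\pi$ I set $\Phi(D)=[\mathcal{L}(D)]$ (finite dimensional by Theorem \ref{th-LD-finite}, and an element of $\K(X_\pi)=\K(X)$ via the birational identification of function fields), and extend additively. Compatibility across models is Proposition \ref{prop-LD-birat}. Additivity on generators, $\mathcal{L}(A)\mathcal{L}(B)\sim\mathcal{L}(A+B)$ for very ample $A,B$, holds because $\Phi_{\mathcal{L}(A)\mathcal{L}(B)}$ is the Segre embedding composed with $\Phi_{\mathcal{L}(A)}\times\Phi_{\mathcal{L}(B)}$, hence an embedding, so Theorem \ref{th-LDL}(2) gives $\overline{\mathcal{L}(A)\mathcal{L}(B)}=\mathcal{L}(\mathcal{D}(\mathcal{L}(A)\mathcal{L}(B)))=\mathcal{L}(A+B)$, using $\mathcal{D}(\mathcal{L}(A))=A$. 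This makes $\Phi$ a well-defined homomorphism. The composition $\Psi\circ\Phi$ is easy on a very ample generator $D$: since $\mathcal{L}(D)$ already has regular Kodaira map, $\Psi(\Phi(D))=[\mathcal{D}(\mathcal{L}(D))]=[D]$, because $\mathcal{D}(\mathcal{L}(D))=D$ for very ample $D$.

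The reverse composition $\Phi\circ\Psi=\mathrm{id}$ is where I expect the main difficulty, since the Kodaira map of $\pi^*(L)$ is only regular, not an embedding, and Theorem \ref{th-LDL} cannot be applied to $\pi^*(L)$ directly. The device I would use is to write $\mathcal{D}(\pi^*L)=A-B$ with $A,B$ very ample on a sufficiently high model, so that $\Phi(\Psi([L]))=[\mathcal{L}(A)]-[\mathcal{L}(B)]$, and then to compare $\mathcal{L}(A)$ with $\pi^*(L)\cdot\mathcal{L}(B)$. Both lie in $\K_{Cart}$, and additivity of $\mathcal{D}$ gives $\mathcal{D}(\pi^*(L)\cdot\mathcal{L}(B))=(A-B)+B=A=\mathcal{D}(\mathcal{L}(A))$. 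The key point is that $\Phi_{\pi^*(L)\cdot\mathcal{L}(B)}$ is an embedding: a product map is an embedding as soon as one factor is, and here the very ample factor $\mathcal{L}(B)$ already separates points and tangent directions. Hence Theorem \ref{th-LDL}(2) applies to both subspaces and yields $\overline{\pi^*(L)\cdot\mathcal{L}(B)}=\mathcal{L}(A)=\overline{\mathcal{L}(A)}$, so that $\pi^*(L)\cdot\mathcal{L}(B)\sim\mathcal{L}(A)$ and therefore $\Phi(\Psi([L]))=[\pi^*L]=[L]$.

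It remains to check that the isomorphism preserves intersection numbers, and this follows once $\Psi$ is understood. For $L_1,\dots,L_n\in\K(X)$, choose a model on which all $\pi^*(L_i)$ have regular Kodaira maps; then Theorem \ref{th-DL-well-defined}(3) gives $[\mathcal{D}(\pi^*L_1),\dots,\mathcal{D}(\pi^*L_n)]=[\pi^*L_1,\dots,\pi^*L_n]$, and since the intersection index is computed on a dense open subset it is unchanged under the birational identification, so this equals $[L_1,\dots,L_n]$. The left-hand side is by definition the intersection number of the b-divisor classes $\Psi([L_i])$, which completes the argument. The only genuinely delicate step is the one isolated above; everything else is a matter of assembling the already established functoriality, additivity, and completion statements.
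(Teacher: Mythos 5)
Your proposal follows essentially the same route as the paper: the same two maps (your $\Phi$ is the paper's $F$, built from very ample representatives via $D\mapsto\mathcal{L}(D)$, and your $\Psi$ is the paper's $G$, built via Lemma \ref{lem-extend-Kodaira} and $L\mapsto\mathcal{D}(\pi^*L)$), the same well-definedness arguments through Propositions \ref{prop-birat-inductive} and \ref{prop-LD-birat}, and the same verification of intersection numbers via Theorem \ref{th-DL-well-defined}(3) on a common model. The one place you genuinely depart from the paper is the composite $\Phi\circ\Psi=\mathrm{id}$. The paper simply invokes Theorem \ref{th-LDL} to get $L\subset F(G(L))\subset\overline{L}$, which is terse: that theorem assumes the Kodaira map is an \emph{embedding}, whereas Lemma \ref{lem-extend-Kodaira} only produces a model on which $\Phi_{\pi^*L}$ is \emph{regular} (it can collapse dimension, e.g.\ for small $L$), and moreover $F$ is defined on general divisors only through a very ample decomposition. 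Your device --- twisting by $\mathcal{L}(B)$ so that $\Phi_{\pi^*(L)\cdot\mathcal{L}(B)}$ becomes an embedding, computing $\mathcal{D}(\pi^*(L)\cdot\mathcal{L}(B))=A$, and concluding $\pi^*(L)\cdot\mathcal{L}(B)\sim\mathcal{L}(A)$ --- fills exactly this gap correctly (note you only need part (1) of Theorem \ref{th-LDL} here, so normality is not even essential). Your argument for additivity of $\Phi$ on very ample generators via the Segre map is likewise a slightly more explicit version of what the paper compresses into a citation of Theorem \ref{th-DL-well-defined}(2). In short: same architecture, but your write-up is more careful at the one genuinely delicate step, and I see no errors in it.
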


\begin{Rem}
Sometimes it is customary to define b-divisors to be the direct limit of vector spaces of $\q$-divisors on birational models of $X$. 
With this definition the main result would assert that the $\q$-vector space of b-divisors is isomorphic to 
the $\q$-vector space $\G(X) \otimes \q$. 
\end{Rem}

\begin{proof}[Proof of Theorem \ref{th-main}]
Define a map $F$ from $\CDiv(\X)$ to the Grothendieck group of $\K(X)$ as follows. 
Let $\D$ be a b-divisor represented by a Cartier divisor $D_\pi$ on a birational model $X_\pi$. 
We know that $D_\pi$ can be written as a difference of two very ample divisors on $X_\pi$. 
Thus to define $F$ it is enough to define it on b-divisors which are represented by very ample divisors. So
without loss of generality we assume that $D_\pi$ is very ample.
By Theorem \ref{th-LD-finite} we know that the subspace $\L(D_\pi) \subset \k(X_\pi)$ associated to $D_\pi$ is finite dimensional. 
Define $$F(\D) = (\pi^{-1})^*(\L(D_\pi)).$$
Suppose $(X_{\pi'}, \pi')$ dominates $(X_\pi, \pi)$ with the corresponding morphism $\rho: X_{\pi'} \to X_\pi$. Then by
Proposition \ref{prop-LD-birat} we have
$$\L(\rho^*(D_\pi)) = \rho^*(\L(D_\pi)),$$
since $\rho$ is a birational isomorphism.
Thus $F$ is well-defined, i.e. is independent of the choice of the representative $(X_\pi, D_\pi)$ for $\D$.

Now suppose $\D$, $\D'$ are two b-divisors represented by very ample divisors 
$D_\pi$, $D_{\pi'}$ on two birational models $(X_{\pi}, \pi)$ and $(X_{\pi'}, \pi')$. By Proposition \ref{prop-birat-inductive}
we can find a third birational model $(X_{\pi''}, \pi'')$ dominating both. Now applying Theorem \ref{th-DL-well-defined}(2) to 
the pull-backs of $D_\pi$ and $D_{\pi'}$ to $X_{\pi''}$,
it follows that $F(\D + \D') = F(\D) F(\D')$, that is, $F$ is a homomorphism. 

Next we define an inverse map $G$ to $F$. 
Suppose $L \in \K(X)$. Then by Lemma \ref{lem-extend-Kodaira} there exists a normal projective model $X_\pi$
such that $\Phi_{\pi^*(L)}: X_\pi \ratmap \p(\pi^*(L)^*)$ extends to a regular map on the whole $X_\pi$ (which we again denote 
by $\Phi_{\pi^*(L)}$). Define $G(L)$ to be the element of $\CDiv(\X)$ represented by the divisor $\mathcal{D}(\pi^*(L))$
in the birational model $X_\pi$. Suppose $X_{\pi'}$ is another birational model such that $\Phi_{\pi'^*(L)}$ is regular.
By Proposition \ref{prop-birat-inductive} we can find a third model $\pi''$ which dominates both $\pi$ and $\pi'$. Now 
$\rho^*(\mathcal{D}(\pi^*(L)) = \rho'^*(\mathcal{D}(\pi'^*(L)) = \mathcal{D}(\pi''^*(L))$. Hence the class in $\CDiv(\X)$
represented by $\mathcal{D}(L)$ is independent of the choice of the model $X_\pi$ and the map $G$ is well-defined.
Finally, if $\D$ is represented by a very ample divisor we know that $G(F(\D)) = \D$, and also by Theorem \ref{th-LDL}, 
$L \subset F(G(L)) \subset \overline{L}$ and hence $F(G(L)) \sim L$. So $F$ and $G$ are inverses of each other and the 
proposition is proved.
\end{proof}

\section{Intersection index is well-defined and multi-additive} \label{sec-int-index-proofs}
In this section we prove Theorems \ref{th-int-index-well-def} and 
\ref{th-int-index-multiadd} using intersection product in the Chow ring of a product of projective spaces. A standard reference for 
Chow rings and their intersection product is \cite[Chapter 8]{Fulton}.

Let $X$ be an irreducible $n$-dimensional variety. 
Let ${\bf L} = (L_1, \ldots, L_n)$ be an $n$-tuple of non-zero finite dimensional subspaces of rational functions on $X$. 
For each $i$ let $\Phi_{L_i}: X \ratmap \p(L_i^*)$ denote the corresponding Kodaira rational map. Suppose 
$X$ is birationally embedded in some projective space $\p^N$ (Chow's lemma). Put $$P = \p^N \times \p(L_1^*) 
\times \cdots \times \p(L_n^*),$$ and consider the rational map
$\Phi_{\bf L}: X \ratmap P$ given by
\begin{equation} \label{equ-Phi-L}
\Phi_{\bf L}: x \mapsto (x, \Phi_{L_1}(x), \ldots, \Phi_{L_n}(x)).
\end{equation}
Let $Y_{\bf L}$ be the closure of the image of $X$ under the rational map $\Phi_{\bf L}$. The map $\Phi_{\bf L}$ is a 
birational isomorphism between $X$ and $Y_{\bf L}$.

\begin{proof}[Proof of Theorem \ref{th-int-index-well-def}]
For each $i$, let $H_i$ be a hyperplane in $\p(L_i^*)$ and let ${\bf H} = \p^N \times H_1 \times \cdots \times H_n$.
Then $H$ is a subvariety of $P$ of codimension $n$. We note that for different choices of the hyperplanes $H_i$ the cycles $[{\bf H}]$ are all rationally equivalent. From the definition of product in the Chow ring $A_*(P)$ we see that
the intersection index $[L_1, \ldots, L_n]$ is equal to the intersection number of the product of cycles $[Y_{\bf L}]$ and 
$[{\bf H}]$, and hence well-defined.
\end{proof}

\begin{proof}[Proof of Theorem \ref{th-int-index-multiadd}]
There is a natural surjective map $L_1' \otimes L_1'' \to L_1'L_1''$. This induces an embedding $(L_1'L_1'')^* 
\hookrightarrow L_1' \otimes L_1''$, and thus an embedding $\p((L_1'L_1'')^*) \hookrightarrow \p((L_1' \otimes L_1'')^*)$.
Consider the Segre map $\p(L_1'^*) \times \p(L_1'') \to \p(L_1'^* \otimes L_1''^*) \cong \p((L_1' \otimes L_1'')^*)$. One has 
a commutative diagram:
\begin{equation} \label{equ-tensor}
\xymatrix{& \p^N \times \p((L_1'L_1'')^*) \times \p(L_2^*) \times \cdots \times \p(L_n^*) \ar[d] \\
X \ar[r] \ar[dr] \ar[ur] & \p^N \times \p((L_1' \otimes L_1'')^*) \times \p(L_2^*) \times \cdots \times \p(L_n^*)\\
& \p^N \times \p((L_1'^*) \times \p(L_1''^*) \times \p(L_2^*) \times \cdots \times \p(L_n^*) \ar[u]}
\end{equation}
where the top vertical map is given by $\p((L'_1L''_1)^*) \hookrightarrow \p((L'_1 \otimes L''_1)^*)$ 
and the bottom vertical map is induced by the Segre map. 
Note that the image of $X$ in $\p((L_1' \otimes L_1'')^*)$ lies in $\p((L_1'L_1'')^*)$.  
Take $f \in L_1'$, $g \in L_1''$. Then $f, g$ define hyperplanes $H_f$, $H_g$ in $\p({L'_1}^*)$, $\p({L_1''}^*)$ respectively.
Moreover $f \otimes g \in L_1' \otimes L_1'' \cong (L_1'^* \otimes L_1''^*)^*$ defines a hyperplane $H$ in 
$\p((L_1' \otimes L_1'')^*)$ and hence in $\p((L_1'L_1'')^*)$. Also take hyperplanes $H_2, \ldots, H_n$ in 
$\p(L_2^*), \ldots, \p(L_n^*)$ respectively. We know from the proof of Theorem \ref{th-int-index-well-def} that:
$$[L_1', L_2, \ldots, L_n] = [Y_{\bf L'}] \cdot [H_f \times H_2 \times \cdots \times H_n],$$
$$[L_1'', L_2, \ldots, L_n] = [Y_{\bf L''}] \cdot [H_g \times H_2 \times \cdots \times H_n],$$
$$[L_1'L_1'', L_2, \ldots, L_n] = [Y_{\bf L}] \cdot [H \times H_2 \times \cdots \times H_n],$$
where $\cdot$ denotes the product in the corresponding Chow rings, and 
${\bf L'} = (L_1', L_2, \ldots, L_n)$, ${\bf L''} = (L_1'', L_2, \ldots, L_n)$, ${\bf L} = (L_1'L_1'', L_2, \ldots, L_n)$.
We note that the pull-back, under the Segre map, of the hyperplane $H$ to $\p(L_1'^*) \times \p(L_1''^*)$ 
coincides with $(H_f \times \p(L_1''^*)) + (\p(L_1'^*) \times H_g)$. This shows that 
$$[H \times H_2 \times \cdots \times H_n] = [H_f \times H_2 \times \cdots \times H_n] + 
[H_g \times H_2 \times \cdots \times H_n].$$ This finishes the proof.
\end{proof}

\end{document}